\documentclass[12pt,a4paper]{article}

\usepackage[margin=2.5cm]{geometry}
\usepackage{amsmath,amsfonts,amsthm}
\usepackage{graphicx,color,parskip}
\usepackage{enumitem}
\usepackage{mathtools}
\usepackage[english]{babel}
\usepackage{todonotes}

\newcommand{\N}{\mathbb{N}}

\newcommand{\R}{\mathbb{R}}

\DeclarePairedDelimiter\norm{\lVert}{\rVert}

\newcommand{\rinj}[1]{r_\text{inj}(#1)}

\newcommand{\T}{{\rm T}}
\newcommand{\D}{{\rm D}}

\newtheorem{theorem}{Theorem}
\newtheorem{lemma}[theorem]{Lemma}

\newtheorem{definition}[theorem]{Definition}
\newtheorem{remark}[theorem]{Remark}

\title{Notions of uniform manifolds}
\author{Jaap Eldering}

\begin{document}

\maketitle

Let $M$ be an $n$-dimensional manifold; the assumed smoothness will be
clear from the context. We are interested in uniformity properties of
$M$ when it is noncompact. These can be formulated in different
ways, e.g.\ in terms of bounded geometry when a Riemannian metric $g$
is present. If no such metric is (canonically) available, it may be more
natural to express uniformity in terms of the atlas and its chart
transition maps. We shall formulate various definitions of uniformity
and investigate their relations.

We follow~\cite{Eichhorn1991:mfld-metrics-noncpt} to define bounded
geometry.
\begin{definition}[Bounded geometry]
  \label{def:bound-geom}
  We say that a complete, finite-dimensional Riemannian manifold
  $(M,g)$ has $k$-th order bounded geometry when the following
  conditions are satisfied:
  \begin{description}[labelindent=1ex,labelwidth=4ex,leftmargin=!]
  \item[(I)] the global injectivity radius $\rinj{M} = \inf\limits_{x \in M}\; \rinj{x}$
    is positive, $\rinj{M} > 0$;
  \item[(B$_k$)] the Riemannian curvature $R$ and its covariant
    derivatives up to $k$-th order are uniformly bounded,
    \begin{equation*}
      \forall\; 0\le i \le k\colon \sup_{x \in M}\; \norm{\nabla^i R(x)} < \infty,
    \end{equation*}
    with operator norm of\/ $\nabla^i R(x)$ as an element of the
    tensor bundle over $x \in M$.
  \end{description}
\end{definition}

\begin{remark}
  Condition \textbf{I} already automatically implies that $(M,g)$ is a
  complete metric space.
\end{remark}

This formulation is also called \emph{coordinate-free} bounded
geometry. It is shown in~\cite{Eichhorn1991:bound-conncoef} (and
in~\cite{Schick2001:mlfd-bndry-boundgeom} for manifolds with boundary)
that this definition implies \emph{coordinate-wise defined} bounded
geometry, where condition \textbf{B$_k$} is replaced by \textbf{B$_k'$}: there exists a
radius $0 < r_0 < \rinj{M}$ such that on each normal coordinate chart
of radius $r_0$, the metric coefficients $g_{ij}$ and their
derivatives up to order $k$ are bounded by a global constant $C_k$.
In~\cite[Prop.~2.4]{Roe1988:index-openmflds} a proof is sketched that
the converse also holds when $k = \infty$. Note that for finite $k$ we
would incur a loss of at least two degrees of differentiability, since
the curvature is defined in terms of second derivatives of the metric.

Let us introduce a more general notion of uniformity, defined purely
in terms of the atlas of a smooth manifold.
\begin{definition}[Uniform manifold]
  \label{def:unif-manifold}
  We say that a manifold $M$ with atlas
  $\mathcal{A} = \big\{(\phi_i\colon U_i \to \R^n) \mid i \in I\big\}$
  is uniform of order $k \ge 1$ if
  \begin{description}[labelindent=1ex,labelwidth=4ex,leftmargin=!]
  \item[(I)] there exists one uniform $\delta > 0$ such that for each
    $x \in M$ there exists a coordinate chart $\phi_i$ that covers a
    ball of radius $\delta$ around $x$, i.e.
    \begin{equation}
      B(\phi_i(x);\delta) \subset \phi_i(U_i);
    \end{equation}

  \item[(B$_k$)] there is one global bound $B_k$ such that all
    transition maps are uniformly bounded in $C^k$ norm:
    \begin{equation}
      \forall i,j \in I\colon \norm{\phi_j\circ\phi_i^{-1}}_k \le B_k.
    \end{equation}
  \end{description}
\end{definition}

\begin{remark}
  The $C^0$ part of the $C^k$ bound restricts charts to have uniformly
  bounded image $\phi(U) \subset \R^n$. This is no loss of generality,
  since we can always break up a large chart and translate each part
  close to the origin. Alternatively, we could require the bound $B_k$
  for the derivatives of $(\phi_j\circ\phi_i^{-1})$ only.
\end{remark}

\begin{definition}[Uniformly compatible atlases]
  \label{def:unif-compat-atlas}
  Let $\mathcal{A}, \mathcal{A'}$ be two uniform $C^k$ atlases for the
  manifold $M$. We say that these are uniformly compatible if the
  union $\mathcal{A} \cup \mathcal{A}'$ is again a uniform $C^k$ atlas
  for $M$.
\end{definition}

Note that although this definition looks identical to the standard
definition for (non-uniform) compatibility of atlases, it does
implicitly depend on the parameters $\delta$ and $B_k$ in
Definition~\ref{def:unif-manifold}. The `injectivity radius' $\delta$
of the combined atlas will at least be equal to the maximum of the
radii of $\mathcal{A}$ and $\mathcal{A}'$, but the bound $B_k$ of the
combination may be larger than the maximum of their bounds. A maximal
uniform atlas cannot be defined since it would require fixing a $B_k$,
but this would mean that uniform compatibility of charts is not an
equivalence relation anymore.

Classes of uniformly bounded $C^k$ functions are defined as follows.
\begin{definition}[Uniformly bounded $C^k$ functions]
  \label{def:unif-Ck-functions}
  Let $(M,\mathcal{A})$ and $(N,\mathcal{B})$ be uniform manifolds of
  order $k \ge 1$. Then we define the class $C^k_b(M;N)$ of uniformly
  bounded $C^k$ functions to consist of those functions
  $f \in C^k(M;N)$ for which there exists a bound $C > 0$ such that
  the coordinate representations satisfy
  \begin{equation}\label{eq:unif-Ck-function}
    \norm{\psi \circ f \circ \phi^{-1}}_k \le C
  \end{equation}
  for all charts $\phi \in \mathcal{A}$ and $\psi \in \mathcal{B}$
  (where defined).
\end{definition}
Note that this class of functions is closed under composition, and
under and multiplication e.g.\ when $N = \R$.

It turns out that definitions~\ref{def:bound-geom}
and~\ref{def:unif-manifold} are equivalent in the following sense.
\begin{theorem}
  \label{thm:equiv-uniform}
  Let $(M,\mathcal{A})$ be a uniform manifold. Then there exists a
  metric $g$ such that $(M,g)$ has bounded geometry and $g$ induces an
  atlas $\mathcal{A}'$ of normal coordinate charts which is uniformly
  compatible again with $\mathcal{A}$.
\end{theorem}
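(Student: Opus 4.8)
The plan is to manufacture $g$ out of $\mathcal A$ by a uniformly controlled partition of unity, read off coordinate estimates for $g$ in the charts of $\mathcal A$, deduce bounded geometry, and finally compare the normal coordinates of $g$ with $\mathcal A$ through the geodesic flow; throughout, ``uniform'' means ``with constants depending only on $n$, $\delta$ and the $B_k$''. First I would pass to a refinement of $\mathcal A$ of bounded multiplicity: condition (I) provides, for each $x$, a chart $\phi_i$ with $B(\phi_i(x);\delta)\subset\phi_i(U_i)$, and the $C^{0}$ and $C^{1}$ parts of the bound (B$_k$) make all transition maps uniformly bi-Lipschitz with constant $\le B_1$, since $\phi_i\circ\phi_j^{-1}$ and its inverse $\phi_j\circ\phi_i^{-1}$ are both $C^1$-bounded. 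A Vitali-type selection then gives a countable subcover of $M$ by coordinate balls $\phi_{i_\alpha}^{-1}\!\big(B(c_\alpha;\rho)\big)$ of a fixed radius $\rho$ (a fraction of $\delta$) whose concentric half-balls still cover $M$ and whose multiplicity is bounded by some $N=N(n,\delta,B_1)$, by a packing argument in $\R^n$. Fixing one bump function $b$ on $\R^n$ that is supported in $B(0;\rho)$ and equal to $1$ on $B(0;\rho/2)$, the normalised bumps $\chi_\alpha$ built from the translates $b(\,\cdot\,-c_\alpha)\circ\phi_{i_\alpha}$ form a partition of unity that is uniformly $C^k$-bounded in every chart of $\mathcal A$, since each $\chi_\alpha\circ\phi_j^{-1}$ is the fixed $b$ composed with a transition map, divided by a sum of at most $N$ such terms which is bounded below by $1$. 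Now set $g=\sum_\alpha\chi_\alpha\,\phi_{i_\alpha}^{*}g_{\mathrm{eucl}}$; in a chart $\phi_j$ its coefficient matrix equals $\sum_\alpha\chi_\alpha\,(D\tau_{\alpha j})^{\mathrm T}(D\tau_{\alpha j})$ with $\tau_{\alpha j}=\phi_{i_\alpha}\circ\phi_j^{-1}$, each Jacobian $D\tau_{\alpha j}$ has singular values in $[B_1^{-1},B_1]$, and the $\chi_\alpha$ sum to $1$, so $B_1^{-2}\mathrm{Id}\le(g_{ab})\le B_1^{2}\mathrm{Id}$ and the coefficients are uniformly $C^{k-1}$-bounded there.

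From these coordinate estimates the coordinate-free conditions of Definition~\ref{def:bound-geom} follow mechanically: the Christoffel symbols of $g$ are uniformly $C^{k-2}$-bounded and its curvature uniformly $C^{k-3}$-bounded in the charts of $\mathcal A$, and because $g$ is uniformly equivalent there to the coordinate-flat metric, the tensor norms $\norm{\nabla^{i}R}$ are uniformly bounded for $i\le k-3$; this gives (B$_{k-3}$), with the loss of derivatives anticipated in the discussion after Definition~\ref{def:bound-geom} (and with $C^\infty$ bounded geometry when $\mathcal A$ is a $C^\infty$ uniform atlas). For condition (I) I would first observe that $(M,g)$ is complete, since a Cauchy sequence eventually lies in a set $\phi_j^{-1}\!\big(\overline{B(\phi_j(x);\delta/2)}\big)$, which is compact in $M$. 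Next, since $g$ is uniformly equivalent to the coordinate-flat metric in each chart, the $g$-ball $B_g(x,\rho_0)$ with $\rho_0$ a fixed fraction of $B_1^{-1}\delta$ sits inside one $\phi_j^{-1}\!\big(B(\phi_j(x);\delta)\big)$; there the Christoffel symbols are uniformly bounded, so a $g$-unit-speed geodesic through $x$ differs from the affine segment $t\mapsto\phi_j(x)+t\dot\gamma(0)$ by a term of size $O(t^{2})$ and hence cannot return to $\phi_j(x)$ before $g$-length $\ell_0>0$, uniformly. Combining this exclusion of short geodesic loops with the absence of conjugate points within distance $\pi/\sqrt{\Lambda}$, where $\Lambda=\sup\abs{K}<\infty$ by the curvature bound, Klingenberg's lemma yields $\rinj{x}\ge\min\{\pi/\sqrt\Lambda,\tfrac12\ell_0,\rho_0\}>0$ uniformly, so $\rinj{M}>0$ and $(M,g)$ has bounded geometry.

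By the cited results~\cite{Eichhorn1991:bound-conncoef}, the normal coordinate charts of $g$ of a fixed radius $r_0<\rinj{M}$ then form a uniform atlas $\mathcal A'$: the metric coefficients are uniformly bounded in them, and the transition between two overlapping normal charts is uniformly bounded. It remains to check that $\mathcal A\cup\mathcal A'$ is again a uniform atlas, for which only the mixed transitions matter. Fix $x$, a normal chart $\psi_x\in\mathcal A'$ centred at $x$, and a chart $\phi_j\in\mathcal A$ that contains the fixed-size ball around $x$. Then $\psi_x^{-1}=\exp^{g}_x\circ E_x$ with $E_x$ a $g$-orthonormal linear frame, and in the coordinates of $\phi_j$ the map $\exp^g_x$ is the time-one map of the geodesic equation $\ddot y=-\Gamma(y)(\dot y,\dot y)$, whose right-hand side is uniformly $C^{k-2}$-bounded. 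Quantitative smooth dependence of ODE solutions on their initial data then bounds $\psi_x\circ\phi_j^{-1}$ --- and, integrating backwards, also its inverse --- uniformly in $C^{k-2}$; since the two domains overlap in a fixed-size neighbourhood of $x$, this is precisely (B$_{k-3}$) for the mixed pairs, while (I) for $\mathcal A\cup\mathcal A'$ is inherited from either atlas. Hence $\mathcal A$ and $\mathcal A'$ are uniformly compatible in the sense of Definition~\ref{def:unif-compat-atlas}, which completes the argument.

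The step I expect to be the main obstacle is the injectivity-radius bound: unlike the curvature estimates it is not a local coordinate computation, and it is where uniformity is genuinely at stake, since one must combine the global statement of Klingenberg's lemma with the metric completeness of $(M,g)$ and with a quantitative exclusion of short geodesic loops, all while keeping every constant dependent only on $n$, $\delta$ and the $B_k$. A secondary but still delicate point is the quantitative dependence of $\exp^g_x$ on its initial data and on the merely $C^{k-2}$ coefficients of the geodesic equation used in the last step --- this is standard ODE theory, but the estimates must be made uniform over all base points and all charts of $\mathcal A$.
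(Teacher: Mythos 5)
Your construction of $g$ coincides with the paper's: a bounded-multiplicity subcover plus a uniformly $C^k$-bounded partition of unity (the paper's Lemmas~\ref{lem:unif-loc-cover} and~\ref{lem:part-unity}), the metric $g=\sum_i\chi_i\,\phi_i^*(g_e)$, the two-sided bound on the coefficient matrix in the charts of $\mathcal{A}$, and the resulting $C^{k-1}$ bounds giving condition \textbf{B}$_{k-3}$; your treatment of the mixed transitions via uniform dependence of the geodesic flow on the uniformly bounded Christoffel symbols is also exactly the paper's argument. Where you genuinely diverge is condition \textbf{I}: you establish completeness, exclude short geodesic loops by comparing geodesics with affine segments in a chart, and then invoke Rauch/Klingenberg with $\Lambda=\sup\abs{K}$, whereas the paper extracts the injectivity radius from the very same flow estimate it uses for compatibility: since $\phi^{-1}\circ\exp_x$ is $C^{k-2}$-close to the identity (the ``Euclidean exponential'' $\widetilde{\exp}_x(v)=x+v$) for small $\delta'$, it is automatically a uniform diffeomorphism on the $\delta'$-ball, so $\rinj{M}\ge\delta'$ with no comparison geometry at all. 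Your route is the classical one from the bounded-geometry literature and is sound (completeness via Hopf--Rinow makes the nearest cut point attained, and your loop exclusion covers precisely the loops Klingenberg's lemma produces), but it needs the curvature bounds first and the extra comparison machinery; the paper's route is more economical, obtaining condition \textbf{I} and the uniform compatibility simultaneously from one estimate. One small imprecision on your side: the inverse of the coordinate expression of $\exp_x$ is not obtained by ``integrating backwards'' (it is not a backward flow map, since $\exp_x$ varies the initial velocity rather than time); the uniform bound on the inverse follows instead from the map being $C^1$-close to the identity, i.e.\ from a quantitative inverse function theorem --- which is how the paper phrases it --- and your index \textbf{B}$_{k-3}$ for the mixed transitions should read $C^{k-2}$, matching the bound you actually derived.
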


The order of smoothness may decrease by a small amount in the process
$\mathcal{A} \rightsquigarrow g \rightsquigarrow \mathcal{A}'$. We
shall work through a number of lemmas to prove this result.

\begin{lemma}\label{lem:BG-implies-unif}
  Let $(M,g)$ be a Riemannian manifold of $k$-th order
  (coordinate-free defined) bounded geometry with $k \ge 2$. Then $M$
  is a uniform manifold of order $k-1$ with the preferred atlas given
  by the normal coordinate charts of some radius $\delta > 0$.
\end{lemma}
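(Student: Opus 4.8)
The plan is to take for $\mathcal{A}'$ the normal coordinate charts of one fixed small radius and to control the chart transitions through the geodesic flow, using the coordinate-wise form of bounded geometry recalled above. By that equivalence, the hypothesis of $k$-th order bounded geometry yields a radius $0 < r_0 < \rinj{M}$ and a constant $C_k$ such that for every $x \in M$ the coefficients of $g$ in the radius-$r_0$ normal chart $\Phi_x := \bigl(\exp_x|_{B_g(x;r_0)}\bigr)^{-1}$, which I denote $g^{(x)}$, satisfy $\norm{g^{(x)}}_k \le C_k$, and moreover the $g^{(x)}$ are uniformly elliptic on $B(0;r_0)$, $c^{-1}\mathrm{Id} \le \bigl(g^{(x)}_{ab}\bigr) \le c\,\mathrm{Id}$ (the lower bound being a standard Jacobi-field estimate from the curvature bound in \textbf{B$_0$} and $\rinj{M} > 0$). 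Set $\delta := r_0/8$ and let $\mathcal{A}'$ consist of the charts $\phi_x := \Phi_x|_{B_g(x;\delta)}$, $x \in M$. Since $\phi_x(x) = 0$ and $\phi_x(U_x) = B(0;\delta)$, condition \textbf{I} of Definition~\ref{def:unif-manifold} holds with this $\delta$, trivially.

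For \textbf{B$_{k-1}$}, take two charts $\phi_i = \phi_{x_i}$, $\phi_j = \phi_{x_j}$ with overlapping domains and put $\tau := \phi_j \circ \phi_i^{-1}$. Overlap forces $d(x_i,x_j) < 2\delta$, and if $v$ lies in the domain of $\tau$ then $\abs{v}_g < \delta < \rinj{x_i}$, so $s \mapsto \exp_{x_i}(sv)$, $s\in[0,1]$, is the minimizing geodesic from $x_i$ to $\exp_{x_i}(v)$; by the triangle inequality it stays in $B_g(x_j;3\delta) \subset B_g(x_j;r_0)$, i.e.\ inside the domain of the radius-$r_0$ chart $\Phi_j$. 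Read in the $\Phi_j$-chart it becomes a geodesic $c_v(s) := \Phi_j(\exp_{x_i}(sv))$ of the coordinate metric $g^{(x_j)}$ with $c_v(0) = \Phi_j(x_i) =: p \in B(0;2\delta)$ and $\dot c_v(0) = Lv$, where the linear map $L := D\Phi_j|_{x_i}$ satisfies $\norm{L}, \norm{L^{-1}} \le \sqrt{c}$ (because $g^{(x_j)}$ is uniformly elliptic at $p$ while $\phi_i$ is a $g$-isometry at $x_i$). Hence
\begin{equation*}
  \tau(v) \;=\; c_v(1) \;=\; \exp^{g^{(x_j)}}_{p}(Lv),
\end{equation*}
where $\exp^{g^{(x_j)}}_p$ is the Riemannian exponential of the metric $g^{(x_j)}$ based at $p$, and all the geodesics $t \mapsto c_v(t)$ occurring here stay inside $B(0;3\delta) \subset B(0;r_0/2)$.

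What remains is a uniform $C^{k-1}$ bound on $\tau$, and the crux is the following auxiliary statement, which I would isolate and prove separately: \emph{if $h$ is a Riemannian metric on $B(0;r_0)\subset\R^n$ with $\norm{h}_k \le C_k$ and $c^{-1}\mathrm{Id}\le(h_{ab})\le c\,\mathrm{Id}$, then for every $p \in B(0;r_0/3)$ the map $w \mapsto \exp^h_p(w)$ is of class $C^{k-1}$ on $\{w : \exp^h_p(tw) \in B(0;r_0/2)\text{ for all }t\in[0,1]\}$, with $C^{k-1}$ norm bounded by a constant depending only on $n$, $k$, $C_k$, $c$ and $r_0$.} Granting this, $\tau = \exp^{g^{(x_j)}}_p \circ L$ is the composition of a $C^{k-1}$-bounded map with a linear map of norm $\le\sqrt c$ on the bounded domain $B(0;\delta)$, so $\norm{\tau}_{k-1} \le B_{k-1}$ with $B_{k-1}$ depending only on the bounded-geometry data; the $C^0$ part is free, as $\tau$ maps into $B(0;\delta)$. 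The auxiliary statement is routine ODE theory: the geodesic spray of $h$ on $B(0;r_0)\times\R^n$ has components polynomial in the velocities with the Christoffel symbols $\Gamma^h$ as coefficients, and $\Gamma^h$ lies in $C^{k-1}$ with norm controlled by $\norm{h}_k$ and $c$ (the ellipticity lower bound keeps $\det h$, hence $h^{-1}$, under control); uniform ellipticity bounds the Euclidean speed of geodesics, so over the region where trajectories stay in $B(0;r_0/2)$ the spray and its first $k-1$ derivatives are uniformly bounded, and the usual Gronwall and variational-equation estimates for the time-$1$ flow deliver the claimed $C^{k-1}$ bound.

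The genuine obstacle is exactly this last point: propagating the derivative bounds through the variational equations while guaranteeing that the geodesics never leave a fixed compact set, together with the bookkeeping that the domain of every transition map lies in the region where the estimate applies — which is precisely why the chart radius is shrunk to $\delta = r_0/8$ at the outset. Everything else is routine. Since $k \ge 2$ we have $k-1 \ge 1$, so $(M,\mathcal{A}')$ is a uniform manifold of order $k-1$ whose charts are, by construction, normal coordinate charts of radius $\delta$; this proves the lemma.
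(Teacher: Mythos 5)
Your argument is correct and follows essentially the same route as the proof the paper points to (it defers to \cite[Lem.~2.6]{Eldering2013:NHIM-noncompact}): pass to coordinate-wise bounded geometry in normal charts, shrink the radius, and control the transition maps by writing them through the exponential map of the coordinate metric and running uniform ODE/Gronwall estimates on the geodesic flow — the same technique the paper itself uses in the proof of Theorem~\ref{thm:equiv-uniform}. The only points to tidy are minor: the uniform ellipticity of the coordinate metric follows already from $g_{ab}(0)=\delta_{ab}$ and the $C^1$ bound after possibly shrinking $r_0$ (no Jacobi fields needed), and the auxiliary flow estimate you isolate is exactly the standard uniform-dependence result cited as \cite[Thm.~A.6]{Eldering2013:NHIM-noncompact}.
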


See~\cite[Lem.~2.6]{Eldering2013:NHIM-noncompact} for a proof. This
result shows that the atlas of normal coordinate charts that arises
from a manifold $(M,g)$ of bounded geometry is itself uniform.

\begin{remark}
  We need not necessarily add the normal coordinate charts centered
  around all points $x \in M$ to the preferred atlas. If we instead
  constructed a (uniformly locally finite) cover as
  in~\cite[Lem.~2.6]{Eldering2013:NHIM-noncompact} with normal coordinate
  balls of size $\delta_2$, such that the balls of a fixed size
  $\delta_1 < \delta_2$ already cover $M$, then this satisfies
  Definition~\ref{def:unif-manifold} with
  $\delta = \delta_2 - \delta_1$.
\end{remark}

We introduce some notation and intermediate results based
on~\cite[Sect.~2.1]{Eldering2013:NHIM-noncompact}. For any point
$x \in M$, $\phi_x$ will denote a chosen coordinate chart that
satisfies condition~\textbf{I} of Definition~\ref{def:unif-manifold}.
Then we define the open neighborhood
\begin{equation}\label{eq:ball}
  B_x(\delta') = \phi_x^{-1}\big(B(\phi_x(x);\delta')\big) \subset M
\end{equation}
as the preimage of the ball of radius $\delta' \le \delta$ around
$\phi_x(x)$. Note that $B_x(\delta')$ need not be a ball in another
coordinate chart $\phi$, but since $\D(\phi\circ\phi_x^{-1})$ and its
inverse are bounded by $B_k$, we have that
\begin{equation}\label{eq:ball-uniformity}
  B\big(\phi(x);\delta'/B_k\big) \subset \phi\big(B_x(\delta)\big) \subset
  B\big(\phi(x);B_k\,\delta'\big)
\end{equation}
insofar these lie within the image of $\phi$, that is, coordinate
chart transformations deform balls only boundedly so. In other words:
local distances induced by the Euclidean distance in each of the
charts are equivalent up to a factor $B_k$. Then we have the
following result.
\begin{lemma}[Uniformly locally finite cover]
  \label{lem:unif-loc-cover}
  Let $M$ be a uniform manifold of order $k \ge 1$ with parameters
  $\delta$ and $B_k$.

  Then for $0 < \delta_1 < \delta_2 < \delta$ small enough, $M$ has a
  countable cover $\big\{B_{x_i}(\delta_2)\big\}_{i \ge 1}$ such that
  \begin{enumerate}
  \item the sets $B_{x_i}(\delta_1)$ already cover $M$;
  \item $\forall\; j \neq i\colon x_j \not\in B_{x_i}(\delta_1/B_k)$;
  \item there exists an explicit global bound $K \in \N$ such that for
    each $x \in M$ the neighborhood $B_x(\delta_2)$ intersects at most
    $K$ of the $B_{x_i}(\delta_2)$.
  \end{enumerate}
\end{lemma}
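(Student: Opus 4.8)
The plan is a greedy packing construction followed by a volume‑counting estimate, carried out chart by chart and using the bounded distortion~\eqref{eq:ball-uniformity} throughout.

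First I would use Zorn's lemma to extract a maximal subset $\{x_i\}_{i\in I}\subset M$ that is \emph{$\delta_1/B_k$-separated}, i.e.\ such that $x_j\notin B_{x_i}(\delta_1/B_k)$ for all $i\ne j$; the family of such subsets is closed under unions of chains, so a maximal one exists, and property~2 then holds by construction. For property~1, fix $x\in M$: by maximality $\{x_i\}\cup\{x\}$ is not $\delta_1/B_k$-separated, so there is an index $i$ with $x\in B_{x_i}(\delta_1/B_k)$ or $x_i\in B_x(\delta_1/B_k)$. In the first case $x\in B_{x_i}(\delta_1)$ and we are done; in the second, the straight segment from $\phi_x(x)$ to $\phi_x(x_i)$ has Euclidean length $<\delta_1/B_k<\delta$, hence stays in $\phi_x(U_x)$, and pushing it forward by $\phi_{x_i}\circ\phi_x^{-1}$ — whose differential has operator norm $\le B_k$ — produces a curve from $\phi_{x_i}(x_i)$ of length $<\delta_1$, so again $x\in B_{x_i}(\delta_1)$. (That $\phi_{x_i}\circ\phi_x^{-1}$ is defined along the whole segment follows from a short continuity/bootstrap argument: the pushed‑forward curve cannot leave $B(\phi_{x_i}(x_i);\delta)\subset\phi_{x_i}(U_{x_i})$, since it never escapes distance $\delta_1<\delta$ of its starting point.) As $\delta_1<\delta_2$, the family $\{B_{x_i}(\delta_2)\}$ is then a cover of $M$, and countability of $I$ follows from second countability of $M$ (or a posteriori from property~3 and connectedness).

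For property~3, I would first show that for $\delta_2$ small enough — say $(2+B_k)\delta_2<\delta$ — whenever $B_{x_j}(\delta_2)\cap B_x(\delta_2)\ne\emptyset$ one has $\phi_x(x_j)\in B(\phi_x(x);(1+B_k)\delta_2)$ and $B_{x_j}(\delta_2)\subset U_x$: picking $y$ in the intersection, the $\phi_{x_j}$-segment from $y$ to $x_j$ pushes forward (again by a continuity argument) under $\phi_x\circ\phi_{x_j}^{-1}$ to a curve starting at $\phi_x(y)\in B(\phi_x(x);\delta_2)$ of length $<B_k\delta_2$, so it reaches $\phi_x(x_j)$ within distance $(1+B_k)\delta_2$ of $\phi_x(x)$, and likewise for the other points of $B_{x_j}(\delta_2)$. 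Next set $r_0:=\delta_1/\big(B_k(1+B_k)\big)$; the balls $\{B_{x_i}(r_0)\}_i$ are pairwise disjoint, since a common point $z\in B_{x_i}(r_0)\cap B_{x_j}(r_0)$ would, by the same pushforward argument, force $\phi_{x_i}(x_j)$ to within distance $(1+B_k)r_0=\delta_1/B_k$ of $\phi_{x_i}(x_i)$, i.e.\ $x_j\in B_{x_i}(\delta_1/B_k)$, contradicting separation. Now restrict to the indices $j$ with $B_{x_j}(\delta_2)\cap B_x(\delta_2)\ne\emptyset$: by~\eqref{eq:ball-uniformity} each $\phi_x\big(B_{x_j}(r_0)\big)\supset B\big(\phi_x(x_j);r_0/B_k\big)$, these sets are pairwise disjoint (as $\phi_x$ is injective on $U_x$ and the $B_{x_j}(r_0)$ are), and all lie in $B\big(\phi_x(x);(1+B_k)\delta_2+r_0/B_k\big)$. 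Comparing Lebesgue volumes in $\R^n$ bounds their number by
\begin{equation*}
  K:=\left(1+\frac{(1+B_k)^2 B_k^2\,\delta_2}{\delta_1}\right)^{n}
\end{equation*}
(rounded up to land in $\N$), an explicit global constant independent of $x$, which is exactly property~3.

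The elementary volume estimate and the triangle inequalities are routine. The one place that genuinely needs care is the repeated claim that a straight segment in one chart can be pushed forward, along its \emph{entire} length, into a neighbouring chart; this is where the smallness of $\delta_1,\delta_2$ relative to $\delta$ and $B_k$ is really used, through the continuity argument sketched above — the pushed‑forward curve has controlled length, so it cannot escape the target chart's guaranteed ball $B(\phi(\cdot);\delta)$ before it reaches its endpoint. Once this bookkeeping is in place, every distance comparison above becomes an instance of~\eqref{eq:ball-uniformity}, and the three properties follow.
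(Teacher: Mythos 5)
Your proposal is correct in outline and reaches the same conclusion, but it takes a genuinely different route to the construction of the cover. The paper builds the centers $x_i$ greedily along a compact exhaustion of $M$ (adding a new center only if it is not yet covered by the previous $B_{x_i}(\delta_1)$, with finiteness at each stage from a convergent-subsequence argument), which gives countability by construction and yields property~2 by converting the asymmetric condition $x_j\notin B_{x_i}(\delta_1)$ for $i<j$ into the symmetric $\delta_1/B_k$ statement via \eqref{eq:ball-uniformity}; you instead take a Zorn-maximal $\delta_1/B_k$-separated set, get property~2 by construction and property~1 from maximality, and then need second countability (standard for manifolds, and implicitly used by the paper's exhaustion anyway) to recover countability --- your alternative ``property~3 plus connectedness'' remark is weaker, since connectedness is not assumed. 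For property~3 both arguments are volume counts in the chart $\phi_x$; the difference lies in the disjointness input: you derive pairwise disjoint balls $B_{x_i}(r_0)$ with $r_0=\delta_1/\big(B_k(1+B_k)\big)$ directly from the separation property~2, which gives $K=\big(1+(1+B_k)^2B_k^2\,\delta_2/\delta_1\big)^n$ depending on $\delta_2/\delta_1$, whereas the paper's \eqref{eq:bound-K} uses ``exclusive'' balls of radius $\delta_2/B_k$ and gets $(\sqrt{3}\,B_k)^{2n}$ independent of the radii; your separation-based derivation is the more explicitly justified of the two (the exclusivity ultimately rests on the same $\delta_1$-separation), at the price of a less uniform constant, which is still an explicit global bound as the lemma requires. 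Two bookkeeping remarks: your smallness condition $(2+B_k)\delta_2<\delta$ suffices for everything you actually use (namely $\phi_x(x_j)\in B\big(\phi_x(x);(1+B_k)\delta_2\big)$ and $B_{x_j}(r_0)\subset U_x$), but the stated auxiliary claim $B_{x_j}(\delta_2)\subset U_x$ needs the slightly stronger $(1+2B_k)\delta_2\le\delta$ used in the paper --- harmless, since the lemma allows you to shrink $\delta_2$; and the repeated segment-pushforward/bootstrap step you flag is indeed where care is needed, being exactly the content of the caveat ``insofar these lie within the image of $\phi$'' attached to \eqref{eq:ball-uniformity}, and your sketch of it is adequate.
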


We follow the proof of~\cite[Lem.~2.16]{Eldering2013:NHIM-noncompact}
with appropriate modifications to replace the metric setting there.

\begin{proof}
  Assume $\delta_1,\delta_2$ fixed, these will be determined later.
  Let $\{M_k\}_{k \ge 1}$ be an exhaustion of $M$ by compact sets.
  Cover $M_k$ by a finite sequence of balls $B_{x_i}(\delta_1)$ that
  extend the sequence covering $M_{k-1}$, as follows: choose a point
  $x \in M_k$ that is not covered yet, and add $B_x(\delta_1)$ to the
  sequence. This sequence is finite, for if it were infinite, it would
  have a converging subsequence $x_{i_j} \to \bar{x} \in M_k$. This is
  a contradiction since then
  $\norm{\phi_{\bar{x}}(x_{i_j}) - \phi_{\bar{x}}(\bar{x})} \to 0$ and
  uniform equivalence of the Euclidean distances in charts now implies
  that the distance between points $x_{i_{j'}}$ in charts
  $\phi_{x_{i_j}}$ must converge to zero as $j,j' \to \infty$. This
  contradicts the assumption that new points $x_i$ are not being
  covered yet. Since $x_j \not\in B_{x_i}(\delta_1)$ for $i<j$ implies
  that $x_i \not\in B_{x_j}(\delta_1/B_k)$, the limit of these
  sequences satisfies the first two claims of the lemma.

  For the third claim let $x \in M$ be arbitrary.
  By~\eqref{eq:ball-uniformity} any ball $B_{x_i}(\delta_2)$ that
  intersects $B_x(\delta_2)$ must be completely contained in
  $B_x\big((1+2B_k)\delta_2\big)$, where we set
  $\delta_2 < \delta/(1+2B_k)$ to have this well-defined. In the chart
  $\phi_x$, each $B_{x_i}(\delta_2)$ occupies an exclusive set
  $B(\phi_x(x_i);\delta_2/B_k)$ with respect to any of the other
  $B_{x_{i'}}(\delta_2)$ balls. Thus, by considering volume estimates
  in the chart $\phi_x$ we obtain
  \begin{equation}\label{eq:bound-K}
    K \le \frac{\big((1+2B_k)\delta_2\big)^n}{(\delta_2/B_k)^n}
      \le \big(\sqrt{3}\,B_k\big)^{2n}
  \end{equation}
  using that $B_k \ge 1$.
\end{proof}

The following lemma is a
straightforward adaptation of~\cite[Lem.~2.17]{Eldering2013:NHIM-noncompact}.
\begin{lemma}[Uniform partition of unity]
  \label{lem:part-unity}
  Let $M$ be a uniform manifold with a uniformly locally finite cover
  with $0 < \delta_1 < \delta$ as per Lemma~\ref{lem:unif-loc-cover}.

  Then there exists a partition of unity by functions
  $\chi_i \in C^k_b(B_{x_i}(\delta_2);[0,1])$ subordinate to this cover.
  There is a global bound on the $C^k$ norm of all the functions
  $\chi_i$.
\end{lemma}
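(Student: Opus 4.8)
The plan is to build the partition by the standard recipe — take bump functions supported in each chart ball, then normalize by the sum — and to track uniformity at each step using the equivalence of chart distances~\eqref{eq:ball-uniformity} and the finite-overlap bound $K$ from Lemma~\ref{lem:unif-loc-cover}. First I would fix a single smooth model bump function $\beta\colon\R^n\to[0,1]$ with $\beta\equiv 1$ on $B(0;\delta_1)$ and $\operatorname{supp}\beta\subset B(0;\delta_2)$; since $\delta_1,\delta_2$ are global constants, $\beta$ has some fixed $C^k$ norm $\norm{\beta}_k = c_0$ depending only on $n,\delta_1,\delta_2,k$. For each $i$ define the pre-bump $\tilde\chi_i = \beta\circ\phi_{x_i}$ translated so that it is centered at $\phi_{x_i}(x_i)$; this is a $C^k$ function on $M$ supported in $B_{x_i}(\delta_2)$, identically $1$ on $B_{x_i}(\delta_1)$.

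The first thing to check is that each $\tilde\chi_i$ lies in $C^k_b$ with a bound \emph{independent of $i$}: in any chart $\phi$ overlapping $B_{x_i}(\delta_2)$, the coordinate representation of $\tilde\chi_i$ is $\beta\circ(\text{translation})\circ(\phi_{x_i}\circ\phi^{-1})$, and since the transition map $\phi_{x_i}\circ\phi^{-1}$ has $C^k$ norm $\le B_k$ by~\textbf{(B$_k$)}, the Faà di Bruno / chain-rule estimate gives $\norm{\psi\text{-rep of }\tilde\chi_i}_k \le P_k(c_0,B_k)$ for a fixed polynomial $P_k$ — a global bound. Next I would set $S = \sum_{i\ge 1}\tilde\chi_i$. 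By claim~(1) of Lemma~\ref{lem:unif-loc-cover} the balls $B_{x_i}(\delta_1)$ already cover $M$, so $S\ge 1$ everywhere; by claim~(3) at each point at most $K$ terms are nonzero, so the sum is locally finite, $S$ is $C^k$, and $S\le K\cdot 1 = K$. Moreover on any chart the $C^k$ norm of (the representation of) $S$ is at most $K$ times the individual bound $P_k(c_0,B_k)$, hence globally bounded, and $S$ is bounded below by $1$.

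Finally I set $\chi_i = \tilde\chi_i / S$. Each $\chi_i$ is supported in $B_{x_i}(\delta_2)$, takes values in $[0,1]$, and $\sum_i\chi_i \equiv 1$, so $\{\chi_i\}$ is a partition of unity subordinate to the cover. For the uniform $C^k$ bound I would invoke the fact noted after Definition~\ref{def:unif-Ck-functions} that $C^k_b$ is closed under multiplication and — applied to the reciprocal — use that $1/S$ has uniformly bounded $C^k$ norm because $S$ does and $S$ is uniformly bounded away from $0$ (the map $t\mapsto 1/t$ is smooth with all derivatives bounded on $[1,K]$, so again Faà di Bruno gives a bound depending only on $k$, $K$, and $\norm{S}_k$). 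Multiplying the two bounded factors $\tilde\chi_i$ and $1/S$ yields the global bound on $\norm{\chi_i}_k$.

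The main obstacle is the bookkeeping in the chain-rule estimates: one must verify that composing a fixed bump with a transition map bounded in $C^k$, summing $\le K$ such functions, and then dividing by the resulting sum, all preserve a single global constant — i.e. that no step secretly depends on $i$ or on the point. This is routine given~\eqref{eq:ball-uniformity}, the uniform transition bound $B_k$, and the overlap bound $K$, but it is the only place where care is needed; everything else is the classical partition-of-unity construction.
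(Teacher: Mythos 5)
Your construction is correct and is essentially the paper's own route: the paper simply defers to the standard argument of \cite[Lem.~2.17]{Eldering2013:NHIM-noncompact}, which is exactly this — pull back a fixed model bump through each chart $\phi_{x_i}$, use the uniform transition bound $B_k$ for a chart-independent $C^k$ estimate, and normalize by the sum, whose lower bound $1$ and upper bound $K$ (from the covering and finite-overlap properties of Lemma~\ref{lem:unif-loc-cover}) make the division uniformly controlled. No gaps; your bookkeeping of where $B_k$ and $K$ enter matches the intended proof.
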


\begin{proof}[Proof of Theorem~\ref{thm:equiv-uniform}]
  We first adapt a standard method to construct a Riemannian
  metric $g$, and then prove both that this metric has bounded
  geometry and that its normal coordinate chart atlas $\mathcal{A}'$
  is uniformly compatible with the original uniform atlas
  $\mathcal{A}$.

  Let $\{(\phi_i,B_{x_i}(\delta_2))\}_{i \in \N}$ be a uniformly locally finite
  cover of $M$ with subordinate partition of unity by functions
  $\chi_i$. Let $g_e$ denote the Euclidean metric on $\R^n$. We define
  the metric
  \begin{equation}\label{eq:unif-metric}
    g = \sum_{i \in \N} \chi_i\,\phi_i^*(g_e).
  \end{equation}
  Note that $g \in C^{k-1}_b$ since at most $K$ terms in the sum are
  non-zero and each term is a composition and product of $C^{k-1}_b$
  functions $\chi_i, \phi_i, \D\phi_i$ and $g_e$; it is a sum of
  positive-definite bilinear forms, hence again positive-definite and
  invertible everywhere. Let $\phi \in \mathcal{A}$ be a coordinate
  chart and $v \in \R^n$ with $\norm{v} = 1$, then we have a lower
  bound
  \begin{equation*}
    \begin{aligned}
      (\phi_* g)(v,v)
      &= \sum_{i \in \N} (\chi_i\circ\phi^{-1})\cdot(\phi_i\circ\phi^{-1})^*(g_e)(v,v)\\
      &= \sum_{i \in \N} (\chi_i\circ\phi^{-1})\,\norm{\D(\phi_i\circ\phi^{-1})\,v}^2\\
      &\ge \frac{1}{B_k^2},
    \end{aligned}
  \end{equation*}
  hence $g^{-1}$ is bounded by $B_k^2$ in any chart. From the
  expression of the derivatives of $g^{-1}$ in terms of $g^{-1}$
  itself and derivatives of $g$, it follows that
  $g^{-1} \in C^{k-1}_b$; thus, also the Christoffel symbols satisfy
  $\Gamma \in C^{k-2}_b$ and this also proves that condition
  \textbf{B}$_{k-3}$ of coordinate-free bounded geometry is satisfied.

  To prove condition~\textbf{I} of Definition~\ref{def:bound-geom}
  that $g$ has a finite injectivity radius, and finally that the
  original atlas $\mathcal{A}$ and the atlas $\mathcal{A}'$ of normal
  coordinate charts are uniformly compatible, we consider coordinate
  transition maps from charts $\exp_x^{-1} \in \mathcal{A}'$ to
  $\phi \in \mathcal{A}$. Since $\exp$ is defined through the time-one
  geodesic flow, we can view a transition map
  \begin{equation}
    \phi^{-1}\circ\exp_x\colon \T_x M \to \R^n
  \end{equation}
  as a local coordinate expression of
  $\exp_x = \pi \circ \Upsilon^1|_{\T_x M}$, where $\Upsilon^t$
  denotes the geodesic flow on $\T M$. This flow is defined by the
  differential equation
  \begin{equation}\label{eq:ODE-geod-flow}
    \begin{aligned}
      \dot{x}^i &= v^i,\\
      \dot{v}^i &= \Gamma(x)^i_{jk} v^j v^k,
    \end{aligned}
  \end{equation}
  with respect to coordinates $(x^i,v^j)$ on $\T M$ induced by $\phi$.
  The flow $\Upsilon^t$ preserves $\norm{v}^2 = g_x(v,v)$ and
  $g,g^{-1}$ are uniformly bounded, so also the coordinate expressions
  $v^i$ are uniformly bounded by $B_k^2\,\delta'$ when the initial
  value $v(0)$ is bounded by $\delta'$. Since $\Gamma$ is bounded
  also, we have that~\eqref{eq:ODE-geod-flow} approximates the system
  \begin{equation}\label{eq:ODE-eucl-flow}
    \begin{aligned}
      \dot{x}^i &= v^i,\\
      \dot{v}^i &= 0,
    \end{aligned}
  \end{equation}
  when $\norm{v(0)} \le \delta'$ is small. Note
  that~\eqref{eq:ODE-eucl-flow} induces the `Euclidean exponential map'
  $\widetilde{\exp}_x(v) = x + v$ with respect to the chart $\phi$.
  Thus by uniform dependence of a flow on the vector field
  (see~\cite[Thm.~A.6]{Eldering2013:NHIM-noncompact}) it follows that
  $\phi^{-1}\circ\exp_x =
  \phi^{-1}\circ\pi\circ\Upsilon^1(x,\,\cdot\,)$
  approximates the identity map on $\R^n$ in $C^{k-2}$ norm for
  sufficiently small $\delta'$. Hence the transition maps are uniform
  $C^{k-2}$ diffeomorphisms and thus the altases $\mathcal{A}$ and
  $\mathcal{A}'$ are uniformly compatible. This also shows that the
  injectivity radius of $g$ satisfies $\rinj{M} \ge \delta'$.
\end{proof}

\bibliographystyle{amsalpha}
\bibliography{bibfile}

\end{document}